\date{}
\newtheorem{theorem}{Theorem}[section]
\begin{document}

\title{\bf  
Dominated and dominator colorings over (edge) corona and hierarchical products}

\author{Sandi Klav\v zar $^{a,b,c}$ \and Mostafa Tavakoli $^{d,}$ \footnote{Corresponding author.}} 

\date{}

\maketitle

\begin{center}
$^a$ Faculty of Mathematics and Physics, University of Ljubljana, Slovenia\\
{\tt sandi.klavzar@fmf.uni-lj.si}

\medskip

$^b$ Faculty of Natural Sciences and Mathematics, University of Maribor, Slovenia\\
\medskip

$^c$ Institute of Mathematics, Physics and Mechanics, Ljubljana, Slovenia\\
\medskip

$^d$ Department of Applied Mathematics, Faculty of Mathematical Sciences,\\
Ferdowsi University of Mashhad, P.O.\ Box 1159, Mashhad 91775, Iran\\
{\tt m$\_$tavakoli@um.ac.ir}

\end{center}

\begin{abstract}
Dominator coloring of a graph is a proper (vertex) coloring with the property that every vertex is either alone in its color class or adjacent to all vertices of at least one color class. A dominated coloring of a graph is a proper coloring such that every color class is dominated with at least one vertex. The dominator chromatic number of corona products and of edge corona products is determined. Sharp lower and upper bounds are given for the dominated chromatic number of edge corona products. The dominator chromatic number of hierarchical products is bounded from above and the dominated chromatic number of hierarchical products with two factors determined. An application of dominated colorings in genetic networks is also proposed.
 
\medskip\noindent
{\bf Keywords:} dominator coloring, dominated coloring, corona product, edge corona product, hierarchical product. 

\medskip\noindent
{\bf 2020 Mathematics Subject Classification:} 05C15, 05C69.
\end{abstract}

\section{Introduction}

Graph colorings form one of the most investigated areas of graph theory. This is in particular so because colorings of (vertices of) graphs form  natural models for a vast number of practical problems involving facility location problems in operational research. It often happens that besides the requirement that adjacent vertices receive different colors, some additional condition(s) on a coloring must be fulfilled. In this way new variants of colorings appear, a relative recent and interesting variant is the following.

A {\it dominator coloring} of a graph $G$ is a proper vertex coloring with the additional property that every vertex $u$ of $G$ forms a color class, or $u$ is adjacent to all vertices of at least one color class. The smallest number of colors needed for a dominator coloring of $G$ is the {\it dominator chromatic number} $\chi_d(G)$ of $G$. This concept was studied for the first by Gera, Horton, and Rasmussen~\cite{Gera}, several papers followed afterwards. Chellali and Maffray~\cite{chellali-2012} proved, among other results, a very interesting fact that determining whether $\chi_d(G)\le 3$ holds can be accomplished in polynomial time. Moreover, they showed that the dominator chromatic number of $P_4$-free graphs can also be computed in polynomial time. Gera~\cite{gera-2007} proved that if $T$ is a nontrivial tree, then $\chi_d(G)\in \{\gamma(T) + 1, \gamma(T) + 2\}$, and later Boumediene Merouane and Chellali~\cite{d1} characterized trees T attaining each of the possibilities.  The dominator chromatic number of Cartesian products of $P_2$ and $P_3$ by arbitrary paths and cycles was determined in~\cite{chen-2018, chen-2017}. Some additional Cartesian products, several direct products, and some corona products were studied in~\cite{Paul}. Dominator colorings of {M}ycielskian graphs were investigated in~\cite{abid-2019}. 

A concept closely related to dominator colorings is the following. A proper coloring of a graph $G$ is a {\it dominated coloring} if each color class is dominated by at least one vertex, that is, for each color class there exists a vertex that is adjacent to all the vertices of the class. The minimum number of colors needed for a dominated coloring of $G$ is the {\it dominated chromatic number} $\chi_{dom}(G)$ of $G$. This concept was introduced in 2015 (the paper being submitted in 2012 though) by Boumediene Merouane et al.~\cite{dom1} where they adopted algorithmic approach for this problem and proved that if $G$ is triangle-free, then $\chi_{dom}(G)$ equals the total domination number of $G$. In~\cite{bagan-2017} different variants of colorings (including dominator and dominated ones) were compared mostly from the algorithmic point of view and very many interesting results presented. Let us just emphasize the dichotomy asserting that dominated coloring is polynomial on claw-free graphs while the dominator coloring is NP-complete on claw-free graphs. This dichotomy indicated that although dominator colorings and dominated colorings appear quite similar, they are in fact strikingly different.   

We proceed as follows. In the rest of the introduction we first discuss applications of dominated colorings and propose a new application in genetic networks. At the end of the introduction standard definitions needed in this paper are listed. Then, in Section~\ref{sec:corona}, we determine the dominator chromatic number of corona products. It is significantly different from the dominated chromatic number which was earlier determined in~\cite{TJM}. In Section~\ref{sec:edge-corona} we first determine the dominator chromatic number of edge corona products. For the dominated chromatic number of such graphs we give sharp lower and upper bonds. We get an equality in particular for edge corona products in which the first factor is bipartite with minimum degree at least $2$. In the final section we bound from above the dominator chromatic number of hierarchical products and determine the dominated chromatic number for the case of two factors. 

\subsection{Applications of dominated colorings}

Already in 2014, Chen~\cite{d3} provided an application of dominated coloring in social networks for finding the minimum stranger groups who can become friends later by an intermediary.
We now propose another applicability in genetic networks as follows. 

In a genetic interaction network $G$, genes (proteins) are represented as vertices (nodes) and their relationships as
edges. Some genes (proteins) do not have direct interactions with each other, but they may be under
regulation by a common gene(protein). Actually, the common gene (protein) can regulate the function of the other genes (proteins), see \cite{gen1,gen2}. 
Therefore, the dominated coloring is to find the minimum groups of genes (proteins) in the genetic (protein) interaction network with two below properties: 
\begin{enumerate}
\item genes (proteins) in the same group do not have direct interactions with each other,
\item genes (proteins) in the same group are regulated by a common gene (protein).
\end{enumerate}

\subsection{Some definitions}

If $G$ is a graph we will denote its order with  $n(G)$ and its size with $m(G)$. For a positive integer $n$, we will use the notation $[n] = \{1,\ldots,n\}$. The chromatic number of $G$ is of course denoted with $\chi(G)$. In a (proper) $k-$coloring of $G$, a {\em color class} is the set of vertices assigned the same color.  If $c:V(G) \rightarrow [k]$ is (proper) coloring of $G$ and $i\in [k]$, then let
$$C_G(i) = \{u\in V(G):\ c(u) = i \}$$
be the {\em color class $i$}. If $G$ will be clear from the context, we will abbreviate its notation to $C(i)$. 

A {\it matching} in a graph $G$ is a set of nonadjacent edges of $G$. The {\it matching number} $\alpha'(G)$ is the cardinality of a largest matching in $G$. If $M$ is a matching, then a vertex is {\em $M$-matched} (or just {\em matched}) if it is an endpoint of an edge from $M$. The {\em vertex cover number} $\beta(G)$ of $G$ is the cardinality of a smallest set of vertices such that each edge has at least one endpoint in the set. 

\section{Coloring corona products}
\label{sec:corona}

The {\em corona product} $G\circ H$ of graphs $G$ and $H$ is obtained from one copy of $G$ and $n(G)$ copies of $H$ by joining with an edge each vertex of the $i^{\rm th}$ copy of $H$, $i\in [n(G)]$, to the $i^{\rm th}$ vertex of $G$, cf.~\cite{Yeh}. If $g\in V(G)$, then the copy of $H$ in $G\circ H$ corresponding to $g$ with be denoted with $H_g$. We may consider the vertex set of $G\circ H$ to be 
$$V(G\circ H) = V(G) \bigcup_{g\in V(G)} V(H_g)\,.$$ 
The dominated chromatic number of corona products is already known.
 
\begin{theorem} {\rm \cite[Theorem 4.4]{TJM}}
\label{thm:corona-dom}
If $G$ and $H$ are graphs, then $\chi_{dom}(G\circ H )= n(G)\chi(H)$.
 \end{theorem}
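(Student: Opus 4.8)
The plan is to prove the two inequalities $\chi_{dom}(G\circ H)\ge n(G)\chi(H)$ and $\chi_{dom}(G\circ H)\le n(G)\chi(H)$ separately; throughout one must assume $G$ has no isolated vertex (otherwise already $\chi_{dom}(K_1\circ H)=\chi(H)+1>\chi(H)$, the apex vertex forming its own color class).

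For the lower bound, let $c$ be a dominated coloring of $G\circ H$. For a fixed $g\in V(G)$, the restriction of $c$ to $V(H_g)$ is a proper coloring of a copy of $H$, hence uses at least $\chi(H)$ colors; and I claim no color occurs in two distinct copies. Indeed, if a color appeared on $u\in V(H_g)$ and on $u'\in V(H_{g'})$ with $g\ne g'$, its color class would need a dominating vertex $w$ adjacent to both $u$ and $u'$, but $N_{G\circ H}(u)\subseteq V(H_g)\cup\{g\}$ and $N_{G\circ H}(u')\subseteq V(H_{g'})\cup\{g'\}$ are disjoint, a contradiction. So the $n(G)$ copies of $H$ already use at least $n(G)\chi(H)$ colors.

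For the upper bound, fix a proper $\chi(H)$-coloring $\phi$ of $H$ and give each $H_{g_i}$ a private palette $P_i=\{(i,1),\dots,(i,\chi(H))\}$ by colouring $v\in V(H_{g_i})$ with $(i,\phi(v))$; every class obtained so far is dominated by $g_i$. The vertices of $G$ must now reuse these $n(G)\chi(H)$ colors. Since $g_i$ dominates $H_{g_i}$ it cannot use $P_i$, so it uses some $P_{\tau(i)}$; analysing which vertex can dominate a class $(p,k)$ that meets both $V(H_{g_p})$ and $V(G)$, one sees the only candidate is $g_p$, so the construction yields a dominated coloring (with exactly $n(G)\chi(H)$ colors) exactly when $g_i\sim_G g_{\tau(i)}$ for all $i$ and, for each $p$, the set $S_p=\{g_i:\tau(i)=p\}$ — which then lies in $N_G(g_p)$ — satisfies $\chi(G[S_p])\le\chi(H)$, so that the $G$-vertices mapped to $P_p$ can actually be properly coloured within it.

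The main obstacle is this last step, constructing $\tau$. For $\chi(H)\ge 2$ it is easy: from a maximal matching $M$ of $G$, send the endpoints of each $M$-edge to one another and each $M$-unsaturated vertex to a matched neighbour; then each nonempty $S_p$ is a single vertex together with an independent set (the unsaturated vertices being independent by maximality), hence $\chi(G[S_p])\le 2\le\chi(H)$. The hard case is $\chi(H)=1$, i.e. $H$ edgeless, where the requirement becomes: every graph without isolated vertices has a proper coloring in which each vertex gets the colour of one of its neighbours (equivalently, $V(G)$ partitions into independent sets $S_v\subseteq N_G(v)$) — a degree-type list-colouring statement for the lists $L(v)=N_G(v)$. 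I would settle this by first deleting degree-one vertices (whose colour is forced and which are reinstated at the end) to reduce to $\delta(G)\ge 2$, then working block by block, using that bipartite blocks are immediate and that an odd cycle admits the cyclic shift $f(v_i)=v_{i+1}$; this is the heaviest part of the argument.
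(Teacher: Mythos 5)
This statement is quoted from~\cite{TJM} and the paper contains no proof of it, so there is nothing internal to compare against; I am assessing your argument on its own. Your observation that the formula fails when $G$ has isolated vertices (already for $K_1\circ K_1=K_2$) is correct and worth making explicit. The lower bound is complete and correct: a color class meeting two copies $V(H_g)$ and $V(H_{g'})$ would need a dominating vertex in $N(u)\cap N(u')\subseteq (V(H_g)\cup\{g\})\cap(V(H_{g'})\cup\{g'\})=\emptyset$. The upper bound for $\chi(H)\ge 2$ via a maximal matching is also sound: unsaturated vertices form an independent set by maximality, so each nonempty fiber $S_p$ induces a graph with chromatic number at most $2$, and your analysis of which vertex can dominate a mixed class correctly forces $S_p\subseteq N_G(g_p)$.

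The genuine gap is in the case $\chi(H)=1$, where everything reduces to the lemma that every graph without isolated vertices admits $\tau:V(G)\to V(G)$ with $\tau(v)\in N_G(v)$ and all fibers independent. The lemma is true, but your sketched proof of it does not go through: after reducing to $\delta\ge 2$ you treat only bipartite blocks and odd cycles, yet a block (a maximal $2$-connected subgraph) can be neither --- e.g.\ $K_4$, an odd wheel, or the Petersen graph --- so the case analysis is incomplete. (Your iterated deletion of degree-one vertices is also delicate, since it can create new degree-one or isolated vertices, e.g.\ in trees.) You could repair this by invoking degree-choosability (every $2$-connected graph other than a clique or an odd cycle is degree-choosable, and cliques are handled by a fixed-point-free permutation), but a much shorter self-contained fix exists: take a DFS spanning tree of each component rooted at $r$, set $\tau(v)=\mathrm{parent}(v)$ for $v\ne r$ and $\tau(r)=u$ where $u$ is a $G$-neighbor of $r$ of maximum depth. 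Since every non-tree edge of a DFS tree joins an ancestor to a descendant, children of a common parent are pairwise nonadjacent, and the choice of $u$ guarantees $r$ is adjacent to no child of $u$; hence every fiber is independent. With that lemma in place your argument proves the theorem (for $G$ without isolated vertices).
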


\begin{figure}[ht!]
 \centerline{\includegraphics[scale=.27]{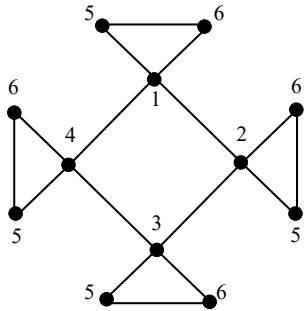}}
\caption{\label{fig1} A dominator coloring of $C_4 \circ K_2$.}
\end{figure}

A dominator coloring of the corona product $C_4 \circ K_2$ is shown in Fig~\ref{fig1}. The dominator chromatic number of $K_n\circ K_1$ has been reported in~\cite{Paul}. We now give a general result for the dominator chromatic number of corona products. 

\begin{theorem}
\label{thm:corona-d}
If $G$ and $H$ are graphs, then $\chi_d(G\circ H) = n(G) + \chi(H)$.
\end{theorem}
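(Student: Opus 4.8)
The plan is to prove the two inequalities $\chi_d(G\circ H) \le n(G) + \chi(H)$ and $\chi_d(G\circ H) \ge n(G) + \chi(H)$ separately. For the upper bound I would construct an explicit dominator coloring. Use a fresh, private color for each of the $n(G)$ vertices of $G$ (so each $g \in V(G)$ forms a singleton color class), and then color every copy $H_g$ with the \emph{same} optimal proper $\chi(H)$-coloring of $H$, using a common palette of $\chi(H)$ colors disjoint from the $n(G)$ colors used on $G$. This is clearly proper: within $H_g$ properness is inherited from the coloring of $H$; the $G$-colors are all distinct and differ from the $H$-palette; and a vertex of $G$ is only adjacent to its own copy $H_g$, whose colors lie in the shared palette. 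The dominator condition holds because (i) each $g\in V(G)$ is a singleton class, and (ii) each vertex $u \in V(H_g)$ is adjacent to $g$, and $\{g\}$ is a full color class, so $u$ dominates a color class. This gives $n(G)+\chi(H)$ colors.

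For the lower bound, let $c$ be any dominator coloring of $G\circ H$. The key observation is that for each $g \in V(G)$, the subgraph $H_g$ together with $g$ induces a copy of $K_1 + H$ (the join), but more usefully: I would argue that the colors used on $V(H_g)$ must include at least $\chi(H)$ colors (since $c$ restricted to $H_g$ is a proper coloring of $H$), and these "internal" colors can be shared across different copies, contributing a total of at least $\chi(H)$. Separately, I must extract $n(G)$ further colors tied to the vertices of $G$. For this, consider a vertex $g \in V(G)$: by the dominator property $g$ is either a singleton color class, or $g$ is adjacent to all of some color class $C(i)$; since $N[g] = \{g\} \cup V(H_g) \cup N_G(g)$... wait — I must be careful, $g$ is adjacent to every vertex of $H_g$ and to its $G$-neighbors, so $C(i) \subseteq V(H_g) \cup N_G(g)$. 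The cleanest route: show that the color of each $g$, or a color "charged" to $g$, is used \emph{only} inside the closed neighborhood structure local to $g$, and that $n(G)$ such colors are forced and pairwise distinct from each other and from the $\chi(H)$ internal ones.

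The main obstacle I anticipate is exactly this lower-bound bookkeeping: ruling out that a single color class simultaneously serves as the "dominated class" witnessing the dominator condition for a vertex of $G$ while also being one of the $\chi(H)$ colors counted inside the copies, i.e., preventing double-counting. I would handle this with a counting/charging argument: for each $g\in V(G)$ pick a color $f(g)$ that is either the color of $g$ itself (if $g$ is a singleton) or the color of a class dominated by $g$ (which then lies in $\{g\}\cup V(H_g)\cup N_G(g)$ and in particular is "localized"), show the map $g \mapsto f(g)$ is injective and its image is disjoint from a set of $\chi(H)$ colors realized on a single copy $H_{g_0}$ chosen appropriately — for instance by using a copy $H_{g_0}$ whose $\chi(H)$ colors are forced to avoid all the $f(g)$'s because each $f(g)$ class, being dominated by $g$, cannot contain two nonadjacent vertices of $H_{g_0}$ unless $g=g_0$. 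Finessing this disjointness for the one exceptional copy, and checking the boundary cases where $H$ is edgeless or $G$ has isolated vertices, is where the real work lies; the rest is routine.
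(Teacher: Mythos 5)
Your upper bound is correct and is exactly the paper's construction: private singleton colors on $V(G)$ plus a shared $\chi(H)$-palette on all the copies $H_g$, with each $u\in V(H_g)$ dominating the singleton class $\{g\}$.

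The lower bound, however, is only a plan, and the plan's central step does not survive scrutiny. You propose to charge to each $g\in V(G)$ a color $f(g)$ (its own color if $\{g\}$ is a singleton class, otherwise the color of a class dominated by $g$) and to prove that $f$ is injective with image disjoint from $\chi(H)$ colors realized on one copy. But $f$ need not be injective: a class dominated by $g$ lies in $V(H_g)\cup N_G(g)$, so it can sit entirely inside $G$; if $\{g\}$ is a singleton class and the only class its neighbour $g'$ dominates is $\{g\}$ itself, then $f(g)=f(g')$, and two nonadjacent vertices of $G$ can likewise both be forced to charge one common class consisting of shared $G$-neighbours (think of opposite vertices of a $C_4$ in $G$). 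You acknowledge that the disjointness and the boundary cases are ``where the real work lies,'' which is an admission that the hard half of the theorem is missing. The paper avoids charging altogether: it first proves a normalization step showing that any dominator coloring can be modified, without increasing the number of colors, so that the vertices of $G$ receive pairwise distinct colors --- the key observation being that if $c(g)=c(g')$ for $g\ne g'$ in $V(G)$, then some color class lies entirely inside $V(H_g)$ and its color can be moved onto $g$. After this normalization, $n(G)$ colors are already accounted for on $V(G)$, and a separate color-chasing argument on the copies $H_g$ extracts $\chi(H)$ further colors not used on $V(G)$. To complete your proof you would need either that normalization lemma or a genuinely different device; as written the lower bound is a gap.
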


\begin{proof}
Set $n = n(G)$ and define  $c: V(G\circ H) \rightarrow [n + \chi(H)]$ as follows. First, color the vertices of $V(G)$ with pairwise different colors from $[n]$. Second, for each $g\in V(G)$, let $c$ restricted to $H_g$ be a $\chi(H)$-coloring of $H$ using colors from the set $\{n+1, \ldots, n+\chi(H)\}$, see Fig.~\ref{fig1} again. Then $c$ is a dominator coloring of $G\circ H$. Indeed, each vertex $g\in V(G)$ forms a color class of cardinality $1$, while each vertex from $H_g$ is adjacent to the color class $\{g\}$. Therefore, $\chi_d(G\circ H)\leq n+\chi(H)$.

It remains to prove that $\chi_d(G\circ H)\geq n+\chi(H)$. Let  $c$ be an arbitrary dominator coloring of $G\circ H$ and suppose that $c(g) = c(g')$ for vertices $g,g'\in V(G) \subseteq V(G\circ H)$. Then we claim that there exists a color class that lies completely in $V(H_g)$. Let $u\in V(H_g)$ and suppose that $c(u) = s$. If $C(s) = \{u\}$, there is nothing to prove. Otherwise, $|C(s)| \ge 2$ and hence $u$ must dominate a color class $r$, where $r\ne s$. Note that $r\ne c(g)$ because $ug'\notin E(G\circ H)$. But then $C(r)\subseteq V(H_g)$, proving the claim. If $C(s) = \{u\}$ then define a coloring $c'$ of $G\circ H$ by setting  
$$c'(x) = \left\{\begin{array}{ll}
s;      & x = g\,, \\
c(g); & x = u\,,\\
c(x); & {\rm otherwise}\,,
\end{array}\right.
$$
otherwise, that is, if $|C(s)| \ge 2$, define $c'$ with  
$$c'(x) = \left\{\begin{array}{ll}
r;      & x = g\,, \\
c(g); & x\in V(H_g), c(x) = r\,,\\
c(x); & {\rm otherwise}\,.
\end{array}\right.
$$
Note that in either of the two cases, $c'$ is a dominator coloring of $G\circ H$ that uses the same number of colors as $c$. Moreover, $c'$ uses one more color on $V(G)$ as $c$. Repeating this construction as long as necessary, we arrive at a dominator coloring $c''$ of $G\circ H$ that uses the same number of colors as $c$, and such that if $g, g'\in V(G)$, $g\ne g'$, then $c''(g) \ne c''(g')$. 

In the rest we may without loss of generality assume that if $g\in V(G)$, then $c''(g)\in [n]$.  Let $g\in V(G)$ be the vertex with $c''(g) = 1$. If $c''$ restricted to $H_g$ uses only colors bigger than $n$, then clearly $c''$ uses at least $n + \chi(H)$ colors. Suppose next that $c''$ restricted to $H_g$ uses some color $i\in [n]$ and let $g'\in V(G)$ be the vertex with $c''(g') = i$. Clearly, $i\ne 1$. We claim that $c''$ restricted to $H_{g'}$ uses a color that is used only in $H_{g'}$. For this sake let $u$ be an arbitrary vertex of $H_{g'}$. We have nothing to prove if $C''(u) = \{u\}$.  Otherwise, no matter whether $c''(u)$ appears on some other vertex of $H_{g'}$ or elsewhere, the vertex $x$ must be adjacent to all the vertices of a color class that lies completely in $H_{g'}$. It follows that the color of this color class is used only in $H_{g'}$, proving the claim. Hence each color used in $H_g$ is either bigger than $n$ or leads to its private new color bigger than $n$. Therefore, $c''$ uses at least $n + \chi(H)$ colors, hence also $c$ uses at least $n + \chi(H)$ colors. As $c$ was an arbitrary dominator coloring of $G\circ H$ we conclude that  $\chi_d(G\circ H)\geq n+\chi(H)$. 
\end{proof}
 
Note that Theorems~\ref{thm:corona-dom} and~\ref{thm:corona-d} reveal that  $\chi_{dom}$ and $\chi_d$ behave strikingly differently on corona products. Roughly speaking, $\chi_{dom}$ is a quadratic, while $\chi_d$ is a linear invariant. In particular, ff $\Gamma$ is a bipartite graph and $W$ is a connected graph of order $k+2$, then   $\chi_{dom}(W\circ \Gamma)-\chi_d(W\circ \Gamma)=2(k+2)-(k+2)-2=k$. Hence for each $k\geq 0$ there exists a graph $G$ with $\chi_{dom}(G)-\chi_d(G)=k$.
 
\section{Coloring edge corona products}
\label{sec:edge-corona}

The {\em edge corona} $G\diamond H$ of graphs $G$ and $H$ is obtained by taking one copy of $G$ and $m(G)$ disjoint copies of $H$ one-to-one assigned to the edges of $G$, and for every edge $gg'\in E(G)$ joining $g$ and $g'$ to every vertex of the copy of $H$ associated to $gg'$, see~\cite{e-cor1,e-cor2}. If $gg'\in E(G)$, then the copy of $H$ in $G\diamond H$ corresponding to $e=gg'$ will be denoted with $H_{gg'}$ (or simply $H_e$). Hence we may consider the vertex set of $G\diamond H$ to be 
$$V(G\diamond H) = V(G) \bigcup_{gg'\in E(G)} V(H_{gg'})\,.$$ 
The edge corona $C_4\diamond K_2$ is shown in Fig.~\ref{fig2} along with its dominator coloring.  

\begin{figure}[ht!]
 \centerline{\includegraphics[scale=.27]{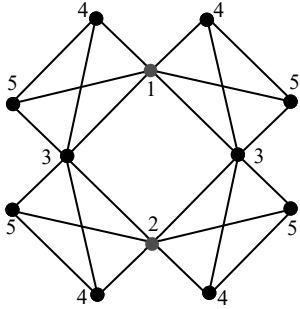}}
\caption{\label{fig2} A dominator coloring of $C_4\diamond K_2$.}
\end{figure}

\begin{theorem}
If $G$ and $H$ are graphs, then $\chi_d(G\diamond H)=\beta(G) + \chi(H) + 1$.
\end{theorem}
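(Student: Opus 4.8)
The plan is to prove the identity $\chi_d(G\diamond H)=\beta(G)+\chi(H)+1$ by establishing matching upper and lower bounds.

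\textbf{Upper bound.} First I would construct an explicit dominator coloring using $\beta(G)+\chi(H)+1$ colors. Let $S\subseteq V(G)$ be a minimum vertex cover of $G$, so $|S|=\beta(G)$, and assign the vertices of $S$ pairwise distinct colors from $[\beta(G)]$. Since every edge $gg'\in E(G)$ has an endpoint in $S$, I would use one fresh color, say color $\beta(G)+1$, for all the vertices of $V(G)\setminus S$; this is proper because $V(G)\setminus S$ is independent (as $S$ is a vertex cover). Finally, on each copy $H_{gg'}$ I would place a fixed $\chi(H)$-coloring of $H$ using the colors $\{\beta(G)+2,\ldots,\beta(G)+\chi(H)+1\}$. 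This is a proper coloring of $G\diamond H$, and it is a dominator coloring: each vertex $s\in S$ is a singleton color class (colors in $[\beta(G)]$ are used exactly once), each vertex in $V(G)\setminus S$ must be checked separately — here one needs the fact that every such vertex, if it has positive degree, lies on an edge whose other endpoint is in $S$, hence is adjacent to that singleton class — and each vertex of each $H_{gg'}$ is adjacent to the singleton color class of whichever endpoint of $gg'$ lies in $S$. (Isolated vertices of $G$, and small degenerate cases of $H$, should be handled as remarks.) This yields $\chi_d(G\diamond H)\le\beta(G)+\chi(H)+1$.

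\textbf{Lower bound.} This is the main obstacle and will follow the same rewriting strategy as the proof of Theorem~\ref{thm:corona-d}. Let $c$ be an arbitrary dominator coloring of $G\diamond H$. The first step is to argue, via local exchange arguments, that we may assume $c$ restricted to $V(G)$ is ``as spread out as possible'': concretely, that the set of vertices of $G$ receiving a color that appears \emph{only} on $V(G)$ forms as large an independent-dominating-type structure as possible, and in particular that at least $\beta(G)$ colors are essentially forced on $V(G)$. The key local claim is the analogue of the claim in Theorem~\ref{thm:corona-d}: if $gg'\in E(G)$ and neither $g$ nor $g'$ ``sees'' a private singleton class, then some color class lies entirely inside $H_{gg'}$, because a vertex $u\in V(H_{gg'})$ with $|C(c(u))|\ge 2$ must dominate some other class, and the only vertices adjacent to all of $V(H_{gg'})\cup\{g,g'\}$-neighborhoods forcing this are inside $H_{gg'}$ together with $g,g'$; tracking where that dominated class can live forces it into $H_{gg'}$ unless $g$ or $g'$ carries a reusable singleton. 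Combining this with the exchange argument, one shows that the colors $c$ uses can be partitioned into: a set of at least $\beta(G)$ colors attributable to a vertex cover of $G$, plus, for the copies of $H$ that are not ``covered'' by a singleton-colored endpoint, at least $\chi(H)$ further colors, plus the $+1$ coming from the fact that at least one vertex of $G$ must still lie outside the singleton-colored cover (or, in the degenerate case, from $\chi(H)$ itself). The bookkeeping here — ensuring the ``$+1$'' is genuinely extra and not double-counted against $\beta(G)$ or $\chi(H)$ — is the delicate part.

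\textbf{Assembling.} Once both inequalities are in hand, the theorem follows. I expect the write-up to mirror Theorem~\ref{thm:corona-d} closely: a clean constructive upper bound, then a lower bound that (i) normalizes $c$ on $V(G)$ by repeated recoloring that preserves the number of colors, (ii) proves the ``private color class inside $H_e$'' claim, and (iii) counts. The subtlety that is absent in the corona case is that an edge of $G$ is dominated by \emph{two} vertices of $G$, so a copy $H_{gg'}$ is ``safe'' as soon as \emph{one} of $g,g'$ is a singleton — which is exactly why $\beta(G)$ (a vertex cover) rather than $n(G)$ appears. I would make sure the exchange argument is set up so that the vertices of $G$ ending up with reused colors form an independent set, matching the vertex-cover/independent-set duality $\beta(G)=n(G)-\alpha(G)$.
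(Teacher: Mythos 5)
Your proposal follows essentially the same route as the paper: the identical vertex-cover-based upper-bound construction, and for the lower bound the same key claim (an edge $gg'$ with $|C(c(g))|>1$ and $|C(c(g'))|>1$ forces a color class entirely inside $H_{gg'}$) followed by recoloring so that the singleton-colored vertices form a vertex cover $K$ with $|K|\ge\beta(G)$. The ``delicate $+1$ bookkeeping'' you defer is resolved in the paper exactly as you anticipate: either $K=V(G)$, giving $n(G)\ge\beta(G)+1$ colors on $G$ plus $\chi(H)$ further colors, or some $g\notin K$ together with an incident copy $H_e$ forms a join needing $\chi(H)+1$ colors, none of which can coincide with the $\beta(G)$ singleton colors on $K$.
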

 
\begin{proof}
Let $K$ be a minimum vertex cover of $G$, so that $|K| = \beta(G)$. 

We first prove that $\chi_d(G\diamond H)\leq \beta(G) + \chi(H) + 1$. To reach this aim, consider the following coloring $c:V(G\diamond H) \rightarrow [\beta(G) + \chi(H) + 1]$:
\begin{itemize}
\item color vertices of $K$ injectively with colors $1, \ldots, \beta(G)$; 
\item color vertices from $V(G)-K$ with color $\beta(G)+1$;
\item for each $e\in E(G)$, color $H_{e}$ with colors $\{\beta(G)+2, \ldots, \beta(G)+\chi(H)+1\}$.
\end{itemize}
An example of such a coloring is given in Fig.~\ref{fig2}. The coloring $c$ is a dominator coloring of $G\diamond H$. Indeed, each vertex $g\in K$ forms a color class of cardinality one. Consider next now a vertex $u$ from some $H_{gg'}$. As $K$ is a vertex cover, we may without loss of generality assume that $g\in K$. But then $u$ is adjacent to the color class $\{g\}$.

It remains to prove that $\chi_d(G\diamond H)\geq \beta(G) + \chi(H) + 1$. Let $c$ be an arbitrary dominator coloring of $G\diamond H$ and suppose that there exists $gg'\in E(G)$ such that $|C(c(g))|>1$ and  $|C(c(g'))|>1$. In this case, we claim that there must exist a color class $r$ that lies completely in $H_{gg'}$. Let $u\in V(H_{gg'})$ and suppose that $c(u)=s$. If $C(s)=\{u\}$, there is nothing to prove. Otherwise, $|C(s)|>1$ and hence $u$ must dominate a color class $r$, where $r\neq s$.
Since $c(g)\neq r$ and $c(g') \neq r$ we see that $C(r)\subseteq V(H_{gg'})$. 
 By changing the colors of certain vertices, we construct from $c$ another dominator coloring $c'$ of $G\diamond H$ as follows:
 $$c'(x) = \left\{\begin{array}{ll}
r;      & x = g\,, \\
c(g); & x\in E(H_{gg'}), c(x) = r\,,\\
c(x); & {\rm otherwise}\,.
\end{array}\right.
$$
Indeed, $c'$ is a dominator coloring because now $u$ still dominates the color class $r$ (which consists of a single element). The coloring $c'$ uses the same number of colors as $c$. We use this technique of recoloring to reach a dominator coloring $c''$ of $G\diamond H$ with this property that for each $gg'\in E(G)$ at least one of $|C''(c''(g))|=1$ and $|C''(c''(g'))|=1$ holds.

Set $K=\{g\in V(G):\ |C''(c''(g))|=1\}$. Because of the above property of $c''$ for each edge, $K$ is a vertex cover of $G$ and consequently $|K|\geq \beta(G)$. 
Clearly, $c''$ does not use colors that are used on $K$ for coloring of other vertices. Suppose first that $|K|=n(G)$. Then $c''$ uses at least $n(G)+\chi(H)$ colors for coloring $G\diamond H$. Since $n(G)\geq \beta(G)+1$ it follows that $c''$ uses at least $\beta(G) + \chi(H) + 1$ colors. Suppose second that $|K|<n(G)$. Then each proper coloring of the vertices from $V(G\diamond H)\setminus K$ uses  at least $\chi(H)+1$ colors. Indeed, if $g\notin K$ and $e$ is an arbitrary edge having $g$ as one of its endpoints, then the join of $g$ and $H_e$ is a subgraph of $G\diamond H$ which needs at least $\chi(H)+1$ colors. Hence also in this case $c''$ uses at least $\beta(G) + \chi(H) + 1$ colors.
\end{proof}

\begin{figure}[ht!]
 \centerline{\includegraphics[scale=.29]{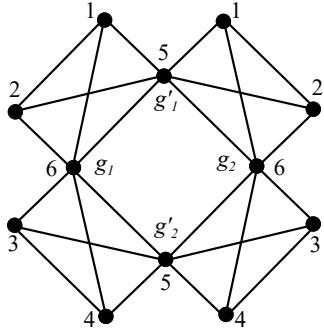}}
\caption{\label{fig5} A dominated coloring of $C_4\diamond  K_2$.}
\end{figure}

\begin{theorem}\label{th4}
If $G$ is a graph without pendant vertices, then $$\chi_{dom}(G\diamond H)\geq \alpha'(G)\chi(H)+\chi_{dom}(G).$$
\end{theorem}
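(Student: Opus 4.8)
The plan is to fix an arbitrary dominated coloring $c$ of $G\diamond H$ together with a maximum matching $M=\{e_1,\dots,e_k\}$ of $G$, where $e_i=g_ig_i'$ and $k=\alpha'(G)$, and to exhibit $\alpha'(G)\chi(H)$ colors that ``live on'' the copies attached to the edges of $M$, together with $\chi_{dom}(G)$ further colors that ``live on'' $V(G)$.

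First I would record three structural facts. \emph{(1)} For each $i$ the colors used on $V(H_{e_i})$ form a set $A_i$ with $|A_i|\ge\chi(H)$, since $c$ restricted to $H_{e_i}$ is a proper coloring of $H$; moreover $c(g_i),c(g_i')\notin A_i$ and $c(g_i)\ne c(g_i')$, because $\{g_i,g_i'\}\cup V(H_{e_i})$ induces the join $H\vee K_2$ in $G\diamond H$. \emph{(2)} For $i\ne j$ one has $A_i\cap A_j=\emptyset$: if a color $s$ occurred on some $u_i\in V(H_{e_i})$ and on some $u_j\in V(H_{e_j})$, then a vertex dominating the color class $C(s)$ would have to be adjacent to both $u_i$ and $u_j$, hence to lie in $(V(H_{e_i})\cup\{g_i,g_i'\})\cap(V(H_{e_j})\cup\{g_j,g_j'\})$, which is empty because $e_i$ and $e_j$ are disjoint edges of $G$. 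Consequently $|A_1\cup\cdots\cup A_k|\ge k\chi(H)$. \emph{(3)} The restriction $c|_{V(G)}$ is a dominated coloring of $G$: $V(G)$ is an induced subgraph of $G\diamond H$, and for a color $s$ appearing on $V(G)$ a dominator $w$ of $C(s)$ either lies in $V(G)$, and then dominates $C(s)\cap V(G)$ inside $G$, or lies in some $V(H_e)$ with $e=gg'$, in which case $C(s)\subseteq N(w)\subseteq\{g,g'\}\cup V(H_e)$, so $C(s)\cap V(G)\subseteq\{g,g'\}$, and since $gg'\in E(G)$ this intersection consists of at most one vertex, which is dominated in $G$ by any of its neighbors (no vertex of $G$ is isolated, as $G$ has no pendant vertices). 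Hence $V(G)$ carries at least $\chi_{dom}(G)$ colors.

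The remaining and hardest step is to combine $(2)$ and $(3)$ \emph{additively}, i.e.\ to arrange that the $\ge k\chi(H)$ colors on the matched copies and the $\ge\chi_{dom}(G)$ colors on $V(G)$ are pairwise distinct. The obstruction is that a color $s\in A_i$ might also occur on a vertex $g\in V(G)$; when this happens, $s\notin\{c(g_i),c(g_i')\}$ forces a dominator $w$ of $C(s)$ to be one of $g_i,g_i'$, and then $g\in N_G(w)$ and $C(s)\subseteq N(w)$. My plan is to eliminate all such coincidences by a sequence of recolorings that never increase the number of colors used: recolor the $V(G)$-part of each offending class onto a color that its vertices may safely receive, exploiting that all of them sit in the neighborhood of a single matched vertex $w$ and that $\deg_G(w)\ge 2$ (no pendant vertices) leaves enough room around $w$. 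Once no color of a matched copy appears on $V(G)$, facts $(1)$--$(3)$ give the total count $|A_1\cup\cdots\cup A_k|+|\{\text{colors on }V(G)\}|\ge k\chi(H)+\chi_{dom}(G)=\alpha'(G)\chi(H)+\chi_{dom}(G)$.

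I expect the main difficulty to be precisely this last disentangling step: showing that every color class straddling a matched copy and $V(G)$ can be separated without spending an extra color, and pinpointing where the absence of pendant vertices is genuinely used — it is exactly what prevents the degenerate situation in which a degree-one matched endpoint creates such a straddling class with no slack available for recoloring. Facts $(1)$--$(3)$ and the concluding inequality are then routine.
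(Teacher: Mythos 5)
Your facts (1)--(3) are correct and match the opening step of the paper's argument: the copies $H_{e_i}$ over a maximum matching pairwise share no colors (their vertices have no common neighbour), so they consume at least $\alpha'(G)\chi(H)$ colors, and the trace of $c$ on $V(G)$ is a dominated coloring of $G$, so $V(G)$ carries at least $\chi_{dom}(G)$ colors. But the theorem is precisely the assertion that these two counts are \emph{additive}, and that is the one step you do not prove: you only state a plan to remove every color straddling a matched copy and $V(G)$ by recoloring, ``exploiting that $\deg_G(w)\ge 2$ leaves enough room,'' and you yourself flag this as the expected difficulty. This is a genuine gap, and not a routine one. If $s\in A_i$ also appears on some $g\in N_G(g_i)\setminus\{g_i'\}$, then any dominator of $C(s)$ must be $g_i$ or $g_i'$, so a replacement color $t$ for $C(s)\cap V(G)$ must be such that $C(t)\cup\bigl(C(s)\cap V(G)\bigr)$ is still dominated by a single vertex and $t$ avoids the neighbourhoods of all these $G$-vertices at once; there is no obvious candidate among the colors already in use (a class wholly contained in a convenient copy $H_{g_ig'}$ need not exist), so the recoloring may cost a fresh color --- exactly what you cannot afford.

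The paper closes this step without any recoloring, by a direct argument on the fixed coloring $c$. It notes that a color of $H_{g_ig_i'}$ whose class is not confined to that copy must be dominated by $g_i$ or by $g_i'$, hence can recur only on copies $H_{g_ig}$ or $H_{g_i'g'}$ attached to edges incident with $g_i$ or $g_i'$ (the no-pendant hypothesis guarantees such edges exist and that $|E(G)\setminus M|\ge\alpha'(G)$, which is also used to show the unmatched copies reuse these same colors rather than new ones); it then argues that a vertex $g$ with $gg_i\in E(G)$, being adjacent to \emph{all} of $H_{g_ig}$, cannot receive a color shared between $H_{g_ig_i'}$ and $H_{g_ig}$. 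So the disjointness of the two color pools is extracted from the domination constraints themselves, not manufactured by modifying the coloring. To complete your write-up you would need either to carry out such a direct argument (your ``obstruction'' analysis is the right starting point for it) or to actually exhibit the recoloring, which I do not believe goes through in general without further ideas.
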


\begin{proof}
Set $r = \alpha'(G)$ and let $M=\{g_1g'_1, \ldots, g_{r}g'_{r}\}$ be a maximum matching of $G$. 
Our proof has three steps.
First, we observe that we need at least $r\,\chi(H)$ colors for coloring all the vertices from $H_{g_ig_i'}$, $i\in [r]$.
Second, we show that we need at least $r\,\chi(H)$ colors for coloring the vertices of all $H_e$ where $e\notin M$. (We apply $r\,\chi(H)$ colors used in the first step for coloring these copies).
Third, we prove that the colors used in the previous steps cannot be assigned to the vertices of $G$.

The fact that we need at least $r\,\chi(H)$ colors for coloring all the vertices from $H_{g_ig_i'}$, $i\in [r]$, follows from the assumption that the edges $g_ig_i'$ form a matching and hence a vertex from $H_{g_jg_j'}$ and a vertex from $H_{g_kg_k'}$, where $k\ne k'$, have no common neighbor. For the second step of our proof consider an edge $g_ig_i'$ and a neighbor of $g_i$ different from $g_i'$, say $g$. (Such a neighbor exists since we have assumed that $G$ has no pendant vertices.) Let $X_i$ denote the set of colors used on $H_{g_ig_i'}$ which are also used in $H_{g_ig}$, that is, denoting the corresponding coloring with $c$ we set 
$$X_i=\{c(v)\ : \ v\in V(H_{g_ig_i'})\}\cap \{c(v)\ : \ v\in V(H_{g_ig})\}.$$
Similarly, suppose that $X'_i$ denotes the set of colors of $H_{g_ig_i'}$ which $c$ uses on $H_{g'_ig'}$ where $g'\neq g_i$.
So a dominated coloring of $G\diamond H$ can use some colors of $H_{g_ig_i'}$ for coloring the vertices of $H_{g_ig}$ where $g$ (that $g\neq g_i'$) is a neighbour of $g_i$ which forms $X_i$, and use remaining colors of $H_{g_ig_i'}$ for coloring the vertices of $H_{g'_ig'}$
where $g'$ (that $g'\neq g_i$) is a neighbour of $g_i'$ which forms $X'_i$.
(For more illustration, see Fig.~\ref{fig5}. In this figure, $M=\{g_1g_1', g_2g_2'\}$ is a maximum matching of $C_4$, $\{1,2\}$ is the set of colors used in $H_{g_1g_1'}$,
$\{3,4\}$ is the set of colors used in $H_{g_2g_2'}$, $X_1=\emptyset$, $X_1'=\{1,2\}$, $X_2=\emptyset$, $X_2'=\{3,4\}$.)
Thus, $|X_i\cup X_i'|\leq \chi(H)$ and $|X_i\cap X_i'|=0$, because if there exists $k\in (X_i\cap X_i')$, then the color class $C(k)$ would not be dominated by a vertex. 
Also, since $G$ does not have pendant vertices, then $|E(G)\setminus M|\geq r$. 

Since $M$ is a maximum matching, an edge $e\in E(G)\setminus M$ is either adjacent to two members of $M$, say $e=g_i'g_j$, or $e$ is adjacent to one member of $M$, say $e=g_ig$. In the first case, vertices of $H_{g_i'g_j}$ are colored with colors of $X_i'\cup X_j$, and so $|X_i'\cup X_j|\geq \chi(H)$. In the second case, vertices of $H_{g_ig}$ are colored with colors of $X_i$, and so $|X_i|\geq \chi(H)$.  Therefore, at least $r\,\chi(H)$ colors are needed for coloring the vertices of $H_e$'s in $G\diamond H$, where $e\notin M$. 

To complete our proof, it is sufficient to show that the colors of $\bigcup_{i=1}^{r}(X_i\cup X_i')$ cannot be used in vertices of $G$.
If $gg_i\in E(G)$ and $g\neq g'_i$, then $c(g)\notin X_i$, and (since $g$ is adjacent to all vertices of $H_{g_ig}$) $c(g)\notin X_i'$.
Therefore, each coloring of $G\diamond H$ needs at least $r\chi(H)$ colors for coloring of copies of $H$ that cannot be applied for vertices of $G$. We conclude that $\chi_{dom}(G\diamond H)\geq \alpha'(G)\chi(H)+\chi_{dom}(G)$.
\end{proof}

Consider $C_4\diamond K_2$ depicted in Fig. \ref{fig5}. $M=\{g_1g_1', g_2g_2'\}$ is a maximum matching of $C_4$ and so $\alpha'(C_4)=2$. Then, by Theorem \ref{th4},  $$\chi_{dom}(C_4\diamond K_2)\geq \alpha'(C_4)\chi(K_2)+\chi_{dom}(C_4)=2\times 2+2=6.$$
On the other hand, the coloring from Fig.~\ref{fig5} demonstrates that $\chi_{dom}(C_4\diamond K_2) \le 6$, hence the bound of Theorem~\ref{th4} is sharp.

\begin{theorem} \label{th6}
If $G$ has $k$ pendant vertices, then $\chi_{dom}(G\diamond H)\geq \alpha'(G)\chi(H)+k$.
\end{theorem}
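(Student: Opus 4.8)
The plan is to follow the template of the proof of Theorem~\ref{th4}. Fix a maximum matching $M=\{g_1g_1',\ldots,g_rg_r'\}$ of $G$, where $r=\alpha'(G)$, let $L=\{v_1,\ldots,v_k\}$ be the set of pendant vertices of $G$ with $v_t$ having unique neighbour $u_t$, and let $c$ be an arbitrary dominated coloring of $G\diamond H$. I would split the colors of $c$ into two disjoint groups: those forced by the matching copies and those forced by the pendant vertices.

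First I would reuse the opening argument of Theorem~\ref{th4}: since $M$ is a matching, for $i\ne j$ no vertex of $G\diamond H$ is simultaneously adjacent to a vertex of $V(H_{g_ig_i'})$ and to a vertex of $V(H_{g_jg_j'})$. Hence in a dominated coloring no color class can intersect two distinct matching copies, for such a class could not be dominated. Since each $H_{g_ig_i'}$ is a properly colored copy of $H$, it follows that the set $S$ of colors that $c$ uses on $\bigcup_{i=1}^{r}V(H_{g_ig_i'})$ satisfies $|S|\ge r\chi(H)$.

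Next I would show that $c$ uses at least $k$ further colors, lying outside $S$, one associated with each pendant vertex. Around a pendant edge $v_tu_t$ the set $\{v_t,u_t\}\cup V(H_{v_tu_t})$ induces the join of the edge $v_tu_t$ with $H$, so $c(v_t)$ differs from $c(u_t)$ and from every color used on $V(H_{v_tu_t})$; moreover the only neighbours of $v_t$ in $G\diamond H$ are $u_t$ and the vertices of $H_{v_tu_t}$. Using these facts, together with the maximality of $M$ (which forces $u_t$ to be $M$-matched whenever $v_tu_t\notin M$) and the domination requirement, one would try to extract from each pendant ``blob'' $\{v_t\}\cup V(H_{v_tu_t})$ a private color $p_t\notin S$, and then check that $p_t\ne p_{t'}$ for $t\ne t'$ since a color class meeting two distinct blobs could not be dominated. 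Combining the two groups would give $\chi_{dom}(G\diamond H)\ge |S|+k\ge \alpha'(G)\chi(H)+k$.

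The main obstacle is precisely this second step: the interaction of a pendant vertex with the rest of the graph through its (possibly $M$-matched) neighbour $u_t$ is delicate. A priori a color of a pendant blob may coincide with a color of $S$ when $u_t$ is an endpoint of a matching edge, and two pendant vertices with a common neighbour may a priori receive the same color; pinning down these cases — by choosing the matching $M$ and the private colors $p_t$ with care, and by exploiting the maximality of $M$ to control the local structure at each $u_t$ — is where the argument has to work hardest.
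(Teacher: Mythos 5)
Your first step (the $r\chi(H)$ colors forced by the matching copies $H_{g_ig_i'}$) is correct and is exactly how the paper begins. But the second step, which you explicitly leave open, is the entire content of Theorem~\ref{th6} beyond Theorem~\ref{th4}, so as it stands the proposal is a plan rather than a proof. Note also that the paper extracts the $k$ extra colors from vertices of $G$, not from the pendant ``blobs'' $\{v_t\}\cup V(H_{v_tu_t})$ as you propose: if a pendant vertex $g$ is $M$-matched, then $c(g)$ itself is shown to lie outside $S$ (its only neighbours are its support vertex and $V(H_{gu})$, and its support vertex cannot lie on another matching edge, so no vertex dominates a class containing $g$ and a vertex of some other $H_{g_ig_i'}$); if $g$ is unmatched, maximality of $M$ forces its neighbour $u$ to be matched, and the extra color is taken to be $c(u)$. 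This sidesteps your first worry (collision of blob colors with $S$ through an $M$-matched support vertex).

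Your second worry, however --- that two pendant vertices with a common neighbour may not contribute two distinct new colors --- is not a technicality to be engineered away: it is a genuine obstruction, and the paper's proof does not address it either (it simply asserts that ``each of the pendant vertices adds one more color''). When two unmatched pendant vertices $v_s,v_t$ share the support vertex $u$, both contribute the \emph{same} color $c(u)$, so the count drops. Concretely, take $G=K_{1,3}$ with centre $u$ and leaves $v_1,v_2,v_3$, and $H=K_1$; then $G\diamond H$ is the friendship graph of three triangles through $u$, and the coloring with classes $\{u\}$, $\{v_1,v_2,v_3\}$, $\{w_1,w_2,w_3\}$ (where $w_i$ is the vertex of $H_{uv_i}$) is a dominated coloring with $3$ colors, whereas the claimed bound is $\alpha'(K_{1,3})\chi(K_1)+3=4$. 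So the inequality of Theorem~\ref{th6} fails without an additional hypothesis (e.g.\ that no two pendant vertices share a neighbour, or with $k$ replaced by the number of support vertices plus the number of $M$-matched pendant vertices). In short: the gap you flag cannot be closed for the statement as written, so you should not expect a clever choice of $M$ or of the private colors $p_t$ to rescue the argument.
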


\begin{proof}
Set $r = \alpha'(G)$ and let $M=\{g_1g_1',\ldots, g_{r}g_{r}'\}$ be a maximum matching of $G$. 
As in the proof of Theorem~\ref{th4} we infer that at least $r\,\chi(H)$ colors are required in a dominated coloring $c$ for the vertices from  $H_{g_ig_i'}$, $i\in [r]$. Let $g$ be a pendant vertex of $G$. If $g$ is an end-point of an edge from $M$, then $c(g)$ is different from all the colors used on $H_{g_ig_i'}$, $i\in [r]$. Otherwise, having in mind that $M$ is a maximum matching, $g$ is adjacent to a matched vertex, say $g_i$. But then $g_i$ requires an additional color. Hence each of the pendant vertices adds one more color to $c$. 
\end{proof}

Theorem~\ref{th6} implies that $\chi_{dom}(P_4\diamond K_4)\geq \alpha'(P_4)\chi(K_4)+k=2\times 4+2=10$. On the other hand, it is not difficult to find a dominated coloring of $P_4\diamond K_4$ using $10$ colors. Hence also the bound of Theorem~\ref{th6} is sharp.

In Theorem~\ref{th4} and~\ref{th6} we have bounded $\chi_{dom}(G\diamond H)$ from below using the matching number of $G$. In our next result we bound $\chi_{dom}(G\diamond H)$ from above using the vertex cover number of $G$.

\begin{theorem}
If $G$ and $H$ are graphs, then $\chi_{dom}(G\diamond H)\leqslant\chi_{dom}(G)+\beta(G)\chi(H)$, with equality when $G$ is bipartite graph without pendant vertices.
\end{theorem}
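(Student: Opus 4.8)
The plan is to establish the upper bound by an explicit construction and then to deduce the equality case essentially for free, by combining Theorem~\ref{th4} with K\H{o}nig's theorem.

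For the upper bound, let $K=\{v_1,\ldots,v_{\beta(G)}\}$ be a minimum vertex cover of $G$. First I would color the vertices of $V(G)$, which induce a copy of $G$ inside $G\diamond H$, by a dominated coloring of $G$ using $\chi_{dom}(G)$ colors. Next I would assign to each $v_i\in K$ a private palette $P_i$ of $\chi(H)$ new colors, the palettes $P_1,\ldots,P_{\beta(G)}$ being pairwise disjoint and disjoint from the $\chi_{dom}(G)$ colors already used on $V(G)$. Since $K$ is a vertex cover, every edge $e\in E(G)$ has at least one endpoint in $K$; I would fix one such endpoint, call it $v_{i(e)}$, and color $H_e$ by a proper $\chi(H)$-coloring drawn entirely from the palette $P_{i(e)}$.

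Then I would check that this is a dominated coloring of $G\diamond H$ using at most $\chi_{dom}(G)+\beta(G)\chi(H)$ colors. Properness is straightforward: distinct copies $H_e$ and $H_{e'}$ are non-adjacent in $G\diamond H$, the coloring restricted to each $H_e$ and to $V(G)$ is proper, and a vertex of $H_e$ is adjacent in $V(G)$ only to the two endpoints of $e$, whose colors lie in the $G$-palette, which is disjoint from $P_{i(e)}$. For the domination condition, each color class contained in $V(G)$ is dominated by the vertex of $G$ that dominated it in the dominated coloring of $G$ (that vertex still dominates the class in $G\diamond H$, since no edges of $G$ are lost); and for a color $c\in P_i$, the class $C(c)$ consists of vertices lying in copies $H_e$ with $i(e)=i$, all of which are incident to $v_i$, which is joined to every vertex of each such copy, so $v_i$ dominates $C(c)$. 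One should also note the implicit hypothesis that $G$ has no isolated vertices, needed for $\chi_{dom}(G)$ to be defined, and remark that any unused palette only decreases the number of colors, so the count is at most $\chi_{dom}(G)+\beta(G)\chi(H)$.

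For the equality statement, assume $G$ is bipartite with no pendant vertices. By K\H{o}nig's theorem $\alpha'(G)=\beta(G)$, and since $G$ has no pendant vertices Theorem~\ref{th4} yields $\chi_{dom}(G\diamond H)\geq \alpha'(G)\chi(H)+\chi_{dom}(G)=\beta(G)\chi(H)+\chi_{dom}(G)$, which together with the upper bound gives equality. The only genuinely delicate point is the verification that the constructed coloring is dominated, in particular the bookkeeping around the choice function $e\mapsto v_{i(e)}$ and the confirmation that each palette color class is dominated by its cover vertex; everything else is routine.
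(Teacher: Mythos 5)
Your proposal is correct and follows essentially the same route as the paper: a minimum vertex cover $K$, a private palette of $\chi(H)$ colors per cover vertex used on all copies $H_e$ assigned to that vertex (your choice function $e\mapsto v_{i(e)}$ is exactly the paper's partition of $E(G)$ into $E_1,\ldots,E_{\beta(G)}$), plus a dominated coloring of $G$ on $V(G)$, with equality deduced from Theorem~\ref{th4} and the K\"onig--Egerv\'ary theorem. Your verification of the domination condition is in fact slightly more careful than the paper's.
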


\begin{proof}
Set $\beta = \beta(G)$ and let $K = \{v_1, \ldots, v_{\beta}\}$ be a vertex cover of $G$. Partition $E(G)$ into subsets of edges $E_1, \ldots, E_{\beta(G)}$, such that if $e\in E_i$, then $v_i$ is an endpoint of $e$. It is clear that such a partition always exists since $K$ is a vertex cover. 

Let $c$ be a coloring of $G\diamond H$ defined as follows. First, for each set of edges $E_i$ reserve private $\chi(H)$ colors and color with then each of the subgraphs $H_e$, $e\in E_i$. Second, color the vertices of $G$ with additional $\chi_{dom}(G)$ colors. (See Fig.~\ref{fig5} for an example of such a coloring. In this figure, $K=\{g_1',g_2'\}$.) Thus $c$ is a coloring using $\chi_{dom}(G)+\beta(G)\chi(H)$ colors. Moreover, $c$ is a dominated coloring because each color class on $G$ is dominated by a vertex from $G$, while the other color classes are dominated by appropriate vertices from $K$. Hence $\chi_{dom}(G\diamond H)\leq\chi_{dom}(G)+\beta(G)\chi(H)$.

Now, suppose $G$ is a bipartite graph without pendant vertices. Then, by Theorem \ref{th4}, $\chi_{dom}(G\diamond H)\geq \alpha'(G)\chi(H)+\chi_{dom}(G)$. Recall that the famous K\"onig–Egerv\'ary Theorem asserts that if $G$ is a bipartite graph, then $\alpha'(G) = \beta(G)$.  Therefore, $\chi_{dom}(G\diamond H)= \beta(G)\chi(H)+\chi_{dom}(G)$.
\end{proof}

\section{Hierarchical products}
\label{sec:hierarchical}

Suppose $\{G_i = (V_i, E_i) \}_{i=1}^N$, is a family of graphs having a distinguished or root vertex $r$. Following Barri\'ere et al.~\cite{b1,b2}, the hierarchical product
 $H = G_N \sqcap \ldots\sqcap G_2\sqcap G_1$ is the graph with vertices as 
 $N$-tuples $(x_N,\ldots, x_1)$, $x_i\in V_i$, and edges defined as follows:
{\small $$ (x_N, \ldots ,x_3,x_2,x_1) \sim \left\lbrace
\begin{array}{c l}
(x_N, \ldots x_3,x_2,y_1); &
\text{$y_1\sim x_1$\: in $\:G_1$,} \\
(x_N, \ldots, x_3,y_2,x_1); & \text{$y_2\sim x_2$\: in $\:G_2$ and
$x_1=r$,}\\
(x_N, \ldots, y_3,x_2,x_1); & \text{$y_3\sim x_3$\: in $\:G_3$ and
$x_1=x_2=r$,}\\
\vdots & \text{\vdots}\\
(y_N, \ldots, x_3,x_2,x_1); & \text{$y_N \sim x_N$\: in $\:G_N$ and
$x_1=x_2= \cdots = x_{N-1}=r$\,.}
\end{array}
\right.$$}
This product has plenty of applications in computer science. We first bound its dominator chromatic number. 

\begin{theorem}
If $\{G_i = (V_i, E_i) \}_{i=1}^N$ is a family of graphs (with a root vertex), then
$$\chi_d(G_N \sqcap \cdots \sqcap G_2\sqcap G_1)\leq \chi_d(G_1)\prod_{i=2}^N n(G_i)\,.$$
\end{theorem}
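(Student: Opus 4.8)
The plan is to build an explicit dominator coloring of $H = G_N \sqcap \cdots \sqcap G_2 \sqcap G_1$ by "lifting" a fixed dominator coloring of $G_1$ to every $G_1$-fiber, and then breaking ties between distinct fibers by appending coordinates. Recall that $H$ decomposes into $\prod_{i=2}^N n(G_i)$ copies of $G_1$, one for each choice $(x_N,\ldots,x_2)$ of the upper coordinates; call such a copy the fiber $F_{(x_N,\ldots,x_2)}$, and note that within a fiber the adjacency is exactly that of $G_1$ on the last coordinate. Fix a dominator coloring $c_1$ of $G_1$ with $\chi_d(G_1)$ colors, say with color set $[\chi_d(G_1)]$.

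First I would define, for each fiber $F = F_{(x_N,\ldots,x_2)}$, a private palette $P_F$ of $\chi_d(G_1)$ colors, pairwise disjoint across the $\prod_{i=2}^N n(G_i)$ fibers, and color the vertex $(x_N,\ldots,x_2,x_1)$ of $F$ by the $c_1(x_1)$-th color of $P_F$. This uses exactly $\chi_d(G_1)\prod_{i=2}^N n(G_i)$ colors in total, matching the claimed bound. The coloring is proper: edges inside a fiber are handled because $c_1$ is proper on $G_1$, and edges between fibers — which only occur when the lower coordinates equal the root, i.e. $x_1 = r$, $x_1 = x_2 = r$, etc. — join vertices lying in fibers with disjoint palettes, so they automatically receive different colors.

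Next I would verify the dominator condition. Take a vertex $v = (x_N,\ldots,x_2,x_1)$ in fiber $F$. Since $c_1$ is a dominator coloring of $G_1$, either $x_1$ is alone in its $c_1$-class, or $x_1$ is $G_1$-adjacent to every vertex of some color class $C_{G_1}(j)$. In the first case, $v$ is alone in its color class in $H$, because its color appears nowhere else in $F$ (by choice) and nowhere outside $F$ (disjoint palettes). In the second case, consider the color in $H$ that is the $j$-th color of $P_F$: its color class in $H$ is exactly $\{(x_N,\ldots,x_2,w) : w \in C_{G_1}(j)\}$, again by disjointness of palettes, and $v$ is adjacent in $H$ to each such vertex precisely because $x_1 \sim w$ in $G_1$ for every $w \in C_{G_1}(j)$. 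Hence $v$ dominates that color class, and the dominator condition holds for every vertex.

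I do not expect a serious obstacle here; the statement is an upper bound proved by a direct construction, and the only point requiring care is the bookkeeping that within-fiber edges use $G_1$-adjacency on the last coordinate while cross-fiber edges never create color conflicts because of the disjoint palettes. The mildly delicate step is confirming that a color class of $H$ never spills outside a single fiber — this is immediate from the disjointness of the $P_F$ — so that the dominator property of $c_1$ transfers verbatim. Everything else is routine verification of properness and of the color count $\chi_d(G_1)\prod_{i=2}^N n(G_i)$.
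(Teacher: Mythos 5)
Your construction is exactly the paper's: the paper colors $(x_{i_N},\ldots,x_{i_2},x_{i_1})$ with the tuple $(i_N,\ldots,i_2,c(x_{i_1}))$, which is precisely your ``private palette per fiber'' indexed by the upper coordinates, and the verification of properness and of the dominator condition proceeds the same way. Your write-up is in fact slightly more careful, since you explicitly treat the case where $x_1$ is alone in its $c_1$-class (so the lifted vertex is a singleton class in $H$), whereas the paper only argues that every vertex dominates some class.
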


\begin{proof}
Let $c$ be a dominator coloring of $G_1$ using $\chi_d(G_1)$ colors. Set  
 $H=G_N \sqcap \cdots\sqcap G_2\sqcap G_1$ and define a coloring $f$ of $H$ with $f(x_{i_N},\ldots ,x_{i_2},x_{i_1})=(i_N,\ldots ,i_2,c(x_{i_1}))$ for $(x_{i_N},\ldots ,x_{i_2},x_{i_1})\in V(H)$. 

If $(x_{i_N},\ldots ,x_{i_2},x_{i_1})(x_{j_N},\ldots ,x_{j_2},x_{j_1})$ is an edge of 
$H$, then there exist $k\in \{i_1, \ldots, i_N\}$ and $l\in \{j_1, \ldots, j_N\}$ such that $x_kx_l\in \cup_{i=1}^NE(G_i)$.
Either way, $f(x_{i_N},\ldots ,x_{i_2},x_{i_1})\neq f(x_{j_N},\ldots ,x_{j_2},x_{j_1})$ 
and so $f$ is a proper coloring of 
$H$ with $\chi_d(G_1)\prod_{i=2}^N n(G_i)$ colors. 

It remains to prove that $f$ is a dominator coloring. It suffices to show
that each vertex of $H$ dominates at least one color class. Denote the color classes of $G$ corresponding to $c$ briefly with $C_i = C_{G_1}(i)$, $i\in [\chi_d(G_1)]$. 
Then by definition of $f$, the set $V_{ij}=\{(x_{i_N},\ldots,x_{i_2},x_{i_1}) \; | \; x_{i_1}\in V_j\}$, where $i\in\left[ \prod_{i=2}^N n(G_i)\right]$ and $j\in[\chi_d(G_1)]$, is a color class of $H$ with respect to $f$. 
Consider a vertex $(x_{i_N},\ldots ,x_{i_2},x_{i_1})\in V(H)$. Since $c$ is 
a dominator coloring of $G_1$, there exists a color class $C_j$ which is dominated by 
$x_{i_1}$. Therefore, the color class $V_{ij}$ is dominated by $(x_{i_N},\ldots ,x_{i_2},x_{i_1})$ and we are done.
\end{proof}

Note that the graph $H\sqcap G$ is obtained from $n(G)$ copies of $H$ and one copy of $G$.
In the following we will use $H_i$ to denote the copies of $H$, and $G'$ to denote the copy of $G$ in $H\sqcap G$. Also, $r_i$ will be the root vertex of $H_i$.

\begin{theorem}
If $G$ is a graph and $H$ a rooted graph with the root $r$, then
$$\chi_{dom}(H\sqcap G)=\begin{cases}
n(G)\chi_{dom}(H); & \chi_{dom}(H)=\chi_{dom}(H-r),\\
n(G)\chi_{dom}(H-r) + I(r)\chi_{dom}(G); & \text{otherwise},
\end{cases}$$
where $I(r) = 0$ if there exists an optimal dominated coloring of $H$ such that $r$ is adjacent to all vertices of at least one color class, otherwise $I(r) = 1$.
\end{theorem}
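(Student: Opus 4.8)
The statement has two cases, so I would establish matching upper and lower bounds in each. Throughout, write $H\sqcap G$ as $n(G)$ copies $H_1,\ldots,H_{n(G)}$ of $H$, with $H_i$ attached to the $i$-th vertex of the copy $G'$ of $G$ through the root $r_i$ of $H_i$; the only edges between distinct copies $H_i,H_j$ are the edges of $G'$ joining $r_i$ to $r_j$ (when the corresponding vertices of $G$ are adjacent). A key structural observation I would record first is that, apart from these root-to-root edges, $H\sqcap G$ is exactly the disjoint union of the $H_i$'s, and the neighbourhood of a non-root vertex of $H_i$ lies entirely inside $H_i$. Hence any color class not containing a vertex of $G'$ must be dominated by a vertex of some single $H_i$; and a non-root vertex of $H_i$ can only dominate a class contained in $H_i$.

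\textbf{Upper bounds.} For the first case, take an optimal dominated coloring of $H$ that uses a common optimal coloring on each copy with disjoint palettes across copies is \emph{not} what we want — instead, since $\chi_{dom}(H)=\chi_{dom}(H-r)$, fix an optimal dominated coloring of $H-r$; extend it to $H$ by giving $r$ a color already present (possible because $H$ has a dominated coloring with $\chi_{dom}(H-r)$ colors). Use pairwise disjoint palettes on the $H_i$'s; this is a proper coloring of $H\sqcap G$ (the only cross edges, root-to-root, join vertices of different palettes), each class is dominated inside its own $H_i$, and it uses $n(G)\chi_{dom}(H)$ colors. For the second case, color each $H_i$ with an optimal dominated coloring of $H_i-r_i$ on a private palette of size $\chi_{dom}(H-r)$, and — if $I(r)=1$ — color the roots $r_1,\ldots,r_{n(G)}$, which induce a copy of $G$, with an extra palette of $\chi_{dom}(G)$ colors realizing an optimal dominated coloring of $G$. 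One must check this is proper (roots get the $G$-palette, disjoint from all $H_i-r_i$ palettes) and dominated: classes inside $H_i-r_i$ are dominated there, and each root-class is dominated by a root, using that $G'$'s coloring is dominated. If $I(r)=0$, there is an optimal dominated coloring of $H$ in which $r$ dominates some class, so we may fold the roots into the copies without the extra $\chi_{dom}(G)$ term — giving $n(G)\chi_{dom}(H-r)$; but note $\chi_{dom}(H)>\chi_{dom}(H-r)$ here, so one must argue the roots can reuse colors from the $H_i-r_i$ palettes while staying proper and keeping all classes dominated, which is where $I(r)=0$ is used.

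\textbf{Lower bounds.} Let $c$ be an arbitrary dominated coloring of $H\sqcap G$. By the structural observation, for each $i$ the restriction of $c$ to $H_i$ must, when we "privatize" it, be a dominated coloring of $H_i$ \emph{or} of $H_i-r_i$ depending on whether $r_i$'s class in $H\sqcap G$ is dominated from inside $H_i$ or not. More precisely: consider the colors appearing on $V(H_i)\setminus\{r_i\}$. Every class of $c$ meeting $V(H_i)\setminus\{r_i\}$ is contained in $H_i$ (since non-root vertices have no outside neighbours, and the class is dominated, forcing the dominator and hence the whole class into $H_i$); so restricting $c$ to $H_i-r_i$ yields a dominated coloring, giving at least $\chi_{dom}(H-r)$ distinct colors on each $H_i-r_i$, and these color sets are pairwise disjoint — that already gives $n(G)\chi_{dom}(H-r)$. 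For the first case one then argues that some additional color must appear (or that in fact each $H_i$ forces $\chi_{dom}(H)$ colors): if every $H_i$ used only $\chi_{dom}(H-r)$ colors and $r_i$ reused one of them, the induced coloring of $H_i$ would be a dominated coloring of $H$ with $\chi_{dom}(H-r)<\chi_{dom}(H)$ colors unless $r_i$'s class is dominated from the $G'$-side — and then one chases how many extra colors the roots collectively need. For the second case, when $I(r)=1$, the roots' classes cannot all be dominated from inside their copies (else we'd get $\chi_{dom}(H)=\chi_{dom}(H-r)$), so the roots require a dominated-coloring structure on $G'$ that cannot reuse the private palettes, yielding the extra $\chi_{dom}(G)$.

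\textbf{Main obstacle.} The delicate point — and where I expect the real work to be — is the interaction between the roots and the copies in the lower bound, precisely the case analysis that separates $\chi_{dom}(H)=\chi_{dom}(H-r)$ from the "otherwise" case and, within the latter, the role of $I(r)$. The subtlety is that a root $r_i$ may legitimately carry a color also used inside $H_i-r_i$ (keeping the copy-count at $\chi_{dom}(H-r)$), but then its class must be dominated either by a neighbour inside $H_i$ (which, combined with the optimal coloring of $H_i-r_i$, would give a dominated coloring of $H$ with only $\chi_{dom}(H-r)$ colors — contradicting the "otherwise" hypothesis) or by a neighbouring root $r_j$. Pinning down exactly when the roots can be absorbed versus when they force a fresh $\chi_{dom}(G)$-sized palette, and matching this to the definition of $I(r)$, is the crux; the rest is bookkeeping with disjoint palettes.
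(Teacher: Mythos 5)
Your proposal follows essentially the same route as the paper's own proof: disjoint palettes on the copies for the upper bound, and for the lower bound the observation that colors used on $V(H_i)\setminus\{r_i\}$ and on $V(H_j)\setminus\{r_j\}$ ($i\neq j$) must differ, a per-copy count of $\chi_{dom}(H-r)$, and then a separate discussion of the roots. The pivotal counting step, however, has a genuine gap. You assert that every color class meeting $V(H_i)\setminus\{r_i\}$ is contained in $H_i$ and that the restriction of $c$ to $H_i-r_i$ is therefore a dominated coloring of $H_i-r_i$, forcing at least $\chi_{dom}(H-r)$ colors on each copy. The vertex dominating such a class does lie in $H_i$ (it is a neighbour of a non-root vertex of $H_i$), but it may be the root $r_i$ itself. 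In that event the class may also contain roots of adjacent copies, so it need not lie inside $H_i$; and, more importantly, the induced coloring of $H_i-r_i$ then has a class whose only dominator is $r_i\notin V(H_i-r_i)$, so it is not a dominated coloring of $H_i-r_i$ and can use strictly fewer than $\chi_{dom}(H-r)$ colors. The paper's proof rests on exactly the same unjustified step (``at least $n(G)\chi_{dom}(H-r)$ different colors are needed''), so you have reproduced its weakness rather than resolved the ``crux'' you correctly identified.

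The gap is fatal rather than cosmetic: the stated formula fails. Take $H=P_5$ with vertices $v_1v_2v_3v_4v_5$, rooted at $r=v_3$, and $G=K_2$. Then $\chi_{dom}(H)=3$ while $\chi_{dom}(H-r)=\chi_{dom}(2K_2)=4$, so we are in the ``otherwise'' case, and $I(r)=0$ because the optimal dominated coloring with classes $\{v_1,v_3\},\{v_2,v_4\},\{v_5\}$ has $r=v_3$ dominating $\{v_2,v_4\}$; the formula therefore predicts $2\cdot 4+0=8$. But $H\sqcap K_2$ consists of two copies $a_1\ldots a_5$ and $b_1\ldots b_5$ of $P_5$ with the single extra edge $a_3b_3$; it is triangle-free with total domination number $6$, and the classes $\{a_1,a_3\},\{a_2,a_4\},\{a_5\},\{b_1,b_3\},\{b_2,b_4\},\{b_5\}$ give a dominated coloring with $6$ colors. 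Note that $\{a_2,a_4\}$ is dominated only by the root $a_3$, and the restriction to $H_a-r_a$ uses only $3<\chi_{dom}(H-r)=4$ colors --- precisely the configuration your argument (and the paper's) overlooks. Any correct version of the lower bound must either exclude classes of $H_i-r_i$ that are dominated solely by the root, or the statement itself must be reformulated.
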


\begin{proof}
It is straightforward to see that $\chi_{dom}(H\sqcap G)$ is at most the claimed expressions, hence it remains to prove that 
$$\chi_{dom}(H\sqcap G)\geq\begin{cases}
n(G)\chi_{dom}(H); & \chi_{dom}(H)=\chi_{dom}(H-r),\\
n(G)\chi_{dom}(H-r) + I(r)\chi_{dom}(G); & \text{otherwise}\,.
\end{cases}$$
By definition of the dominated coloring, there does not exist
vertices from $V(H_i-r_i)$ and $V(H_j-r_j)$, where $i\neq j$, with the same color.
Hence at least $n(G)\chi_{dom}(H-r)$ different colors are needed in a dominated coloring of the subgraphs $H_i$ and so $\chi_{dom}(H\sqcap G)\geq |V(G)|\chi_{dom}(H-r)$. So, if $\chi_{dom}(H)=\chi_{dom}(H-r)$, then $\chi_{dom}(H\sqcap G)\geq n(G)\chi_{dom}(H-r) = n(G)\chi_{dom}(H)$.
Also, in the case that in some optimal dominated coloring $r$ is adjacent to all vertices of at least one color class in $H$, we can assign the color of the class which is dominated with $r_i$ to $r_j$ if $r_ir_j\in E(H\sqcap G)$, and so $H\sqcap G$ could be colored with $n(G)\chi_{dom}(H-r)$ different colors. Otherwise, we need at least $\chi_{dom}(G)$ different colors for dominated coloring of vertices of $G'$, and so $\chi_{dom}(H\sqcap G)\geq \chi_{dom}(G)+n(G)\chi_{dom}(H-r) $.
\end{proof}

Let $G_1,\ldots,G_k$ be rooted graphs with root vertices $r_1,\ldots,r_k$, respectively. The bridge-cycle graph $BC(G_1,\ldots,G_k; r_1,\ldots,r_k)$ is the graph obtained from the graphs $G_1,\ldots,G_k$ by joining the vertices $r_i$ and $r_{i+1}$ for all $i\in [r-1]$ and connecting the vertices $r_1$ and $r_k$ by an edge, see Fig.~\ref{fig3}. 

\begin{figure}[ht!] 
\centerline{\includegraphics[scale=.3]{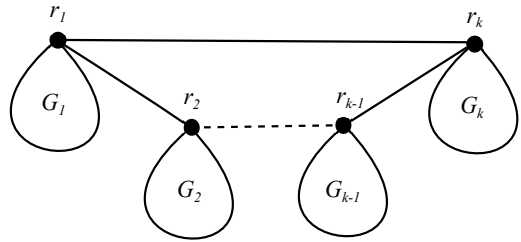}}
\caption{\label{fig3} The bridge-cycle graph $BC(G_1,\ldots,G_k; r_1,\ldots,r_k)$.}
\end{figure}
If $G_1=\cdots=G_k=G$, then we have $BC(G_1,\ldots,G_k; r_1,\ldots,r_k)\cong G\sqcap C_k$. Combining the fact that $\chi_{dom}(C_k)=\begin{cases}
\frac{k}{2} & \text{if} \; 4\mid k,\\
\lfloor\frac{k}{2}\rfloor+1 & \text{otherwise},
\end{cases}$, see~\cite{TJM}, and Theorem 2.8, we obtain that 
$\chi_{dom}(BC(G,\ldots,G; r,\ldots,r))=\chi_{dom}(G\sqcap C_k)$. Consequently,  
{\small $$\chi_{dom}(G\sqcap C_k)=\begin{cases}
k \chi_{dom}(G); & \chi_{dom}(G)=\chi_{dom}(G-r)\,,\\
\frac{k I(r)}{2}+k\chi_{dom}(G-r); & \chi_{dom}(G)\neq \chi_{dom}(G-r) \ \text{and}\ 4\mid k\,,\\
(\lfloor\frac{k}{2}\rfloor+1) I(r)+k\chi_{dom}(G-r); & \chi_{dom}(G)\neq \chi_{dom}(G-r) \ \text{and}\ 4\nmid k\,.
\end{cases}$$}

\section*{Acknowledgements}

S.K.\ acknowledges the financial support from the Slovenian Research Agency (research core funding No.\ P1-0297 and projects J1-9109, j1-1693, N1-0095, N1-0108).

\end{document}